\documentclass[conference]{IEEEtran}
\IEEEoverridecommandlockouts
\usepackage{cite}
\usepackage{amsmath,amssymb,amsfonts,bm}
\usepackage{amsthm}
\usepackage{algorithmic}
\usepackage{graphicx}
\usepackage{textcomp}
\usepackage{xcolor}
\usepackage{cuted}
\usepackage[section]{placeins}
\usepackage{stfloats}
\usepackage{caption}
\makeatletter
  \newcommand\tabcaption{\def\@captype{table}\caption}
\makeatother
\newtheorem{theorem}{Theorem}
\newtheorem{remark}{Remark}
\newtheorem{example}{Example}
\def\BibTeX{{\rm B\kern-.05em{\sc i\kern-.025em b}\kern-.08em
    T\kern-.1667em\lower.7ex\hbox{E}\kern-.125emX}}
\begin{document}

\title{A benchmark generator for boolean quadratic programming\\
}

\author{\IEEEauthorblockN{Xiaojun Zhou}
\IEEEauthorblockA{\textit{School of Automation} \\
\textit{Central South University}\\
Changsha, China \\
410083}
\and
\IEEEauthorblockN{Tingwen Huang}
\IEEEauthorblockA{\textit{School of Engineering} \\
	\textit{Texas A\&M University at Qatar}\\
	Doha, Qatar \\
	PO Box 23874}
}

\maketitle

\begin{abstract}
For boolean quadratic programming (BQP), we will show that there is no duality gap between the primal and dual problems under some conditions by using the classical Lagrangian duality.
A benchmark generator is given to create random BQP problems which can be solved in polynomial time. Several numerical examples are generated to demonstrate the effectiveness of the proposed method.
\end{abstract}
\setlength{\parskip}{4pt}
\begin{IEEEkeywords}
Lagrangian duality, Boolean quadratic programming, Polynomial time, Benchmark generator
\end{IEEEkeywords}

\section{Introduction}
Boolean quadratic programming (BQP) has found many applications (see \cite{billionnet2007using} the  references therein), for example, the Boolean networks \cite{cheng2010linear}, and the well-known maxcut problem \cite{marti2009advanced}, can be formulated as BQP. Various approaches have been used to solve BQP, including semidefinite programming (SDP) relaxation \cite{poljak1995recipe}, second order cone programming (SOCP) relaxation  \cite{kim2001second}, decomposition method \cite{elloumi2000decomposition} and randomized heuristics \cite{festa2002randomized}. However, these approaches are only able to find inexact solutions to BQP. On the other hand, methods based on branch-and-cut \cite{leyffer2001integrating} can find exact solutions, but they are usually time-comsuming. 
Therefore, there exists a necessity to evaluate the performance of approaches to solving BQP. 
To provide testbed for these approaches, a benchmark generator for BQP problems is proposed in this study. 

\section{Theoretical basics}
Considering the following boolean quadratic programming (BQP) problem
\begin{equation}
\min _{\boldsymbol{x} \in \mathbb{R}^{n}}\left\{f(\boldsymbol{x})=\frac{1}{2} \boldsymbol{x}^{T} Q \boldsymbol{x}-\boldsymbol{c}^{T} \boldsymbol{x} \mid \boldsymbol{x} \in\{-1,1\}^{n}\right\},\label{eq}
\end{equation}
where, $Q=Q^{T} \in \mathbb{R}^{n \times n}$ is a given indefinite matrix, $\boldsymbol{c} \in \mathbb{R}^{n}$ is a given nonzero vector.

By introducing the Lagrange multiplier $\lambda_i$ associated with each constraint $\frac{1}{2}(x_i^2-1) = 0$, the Lagrangian $L:\mathbb{R}^n \times \mathbb{R}^n \rightarrow \mathbb{R}$ can be defined as
\begin{equation}
\begin{aligned}
L(\boldsymbol{x}, \boldsymbol{\lambda}) &=\frac{1}{2} \boldsymbol{x}^{T} Q \boldsymbol{x}-\boldsymbol{c}^{T} \boldsymbol{x}+\sum_{i=1}^{n} \frac{1}{2} \lambda_{i}\left(x_{i}^{2}-1\right) \\
&=\frac{1}{2} \boldsymbol{x}^{T} Q(\boldsymbol{\lambda}) \boldsymbol{x}-\boldsymbol{c}^{T} \boldsymbol{x}-\frac{1}{2} \boldsymbol{\lambda}^{T} \boldsymbol{e}
\end{aligned},\label{eq}
\end{equation}
where, $\boldsymbol{\lambda}=\left(\lambda_{1}, \cdots, \lambda_{n}\right)^{T}, \boldsymbol{e} \in \mathbb{R}^{n}$ is a vector with all entries one, and $Q_{\lambda}$ is
\begin{equation}
Q(\boldsymbol{\lambda})=Q+\operatorname{diag}(\boldsymbol{\lambda}),\label{eq}
\end{equation}
here, $\operatorname{diag}(\boldsymbol{\lambda})$ is a diagonal matrix with its diagonal entries $\lambda_{1}, \cdots, \lambda_{n}$.

The Lagrangian dual function can be obtained by
\begin{equation}
g(\boldsymbol{\lambda})=\inf _{\boldsymbol{x} \in\{-1,1\}^{n}} L(\boldsymbol{x}, \boldsymbol{\lambda}).\label{eq}
\end{equation}

Let define the following dual feasible space
\begin{equation}
\mathcal{S}^{+}=\left\{\boldsymbol{\lambda} \in \mathbb{R}^{n} \mid Q(\boldsymbol{\lambda}) \succ 0\right\},\label{eq}
\end{equation}
then the Lagrangian dual function can be written explicitly as
\begin{equation}
g(\boldsymbol{\lambda})=-\frac{1}{2} \boldsymbol{c}^{T} Q^{-1}(\boldsymbol{\lambda}) \boldsymbol{c}-\frac{1}{2} \boldsymbol{\lambda}^{T} \boldsymbol{e}, \text { s.t. } \boldsymbol{\lambda} \in \mathcal{S}^{+},\label{eq}
\end{equation}
associated with the Lagrangian equation
\begin{equation}
Q(\boldsymbol{\lambda}) \boldsymbol{x}=\boldsymbol{c}.\label{eq}
\end{equation}

Finally, the Lagrangian dual problem can be obtained as
\begin{equation}
\max _{\boldsymbol{\lambda} \in \mathcal{S}^{+}}\left\{g(\boldsymbol{\lambda})=-\frac{1}{2} \boldsymbol{c}^{T} Q^{-1}(\boldsymbol{\lambda}) \boldsymbol{c}-\frac{1}{2} \boldsymbol{\lambda}^{T} \boldsymbol{e}\right\}.\label{eq}
\end{equation}

\begin{theorem}\label{the1}
If $\overline{\boldsymbol{\lambda}}$ is a critical point of the Lagrangian dual function and $\lambda \in S^{+}$, then the corresponding
$\overline{\boldsymbol{x}}=Q^{-1}(\overline{\boldsymbol{\lambda}}) \boldsymbol{c}$ is a global solution to the BQP problem.
\end{theorem}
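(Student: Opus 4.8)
The plan is to combine a direct computation of the stationarity condition for $g$ with a weak-duality sandwich argument. First I would compute the gradient $\nabla g(\bm\lambda)$ on the open set $\mathcal{S}^{+}$. Using the standard identity $\partial Q^{-1}(\bm\lambda)/\partial\lambda_i = -Q^{-1}(\bm\lambda)\,\bm e_i\bm e_i^T\,Q^{-1}(\bm\lambda)$, where $\bm e_i$ is the $i$-th coordinate vector, differentiating $g(\bm\lambda) = -\tfrac12\bm c^T Q^{-1}(\bm\lambda)\bm c - \tfrac12\bm\lambda^T\bm e$ yields
\begin{equation}
\frac{\partial g}{\partial\lambda_i}(\bm\lambda) = \frac12\bigl(\bm c^T Q^{-1}(\bm\lambda)\bm e_i\bigr)^2 - \frac12 = \frac12\bigl(x_i(\bm\lambda)^2 - 1\bigr),
\end{equation}
where $\bm x(\bm\lambda) = Q^{-1}(\bm\lambda)\bm c$ and I have used the symmetry of $Q^{-1}(\bm\lambda)$. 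Hence the critical-point equation $\nabla g(\bar{\bm\lambda}) = \bm 0$ is equivalent to $\bar x_i^2 = 1$ for every $i$, that is, $\bar{\bm x} = Q^{-1}(\bar{\bm\lambda})\bm c \in \{-1,1\}^n$. This step already shows that the candidate $\bar{\bm x}$ is \emph{feasible} for the BQP.

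The second ingredient is the observation that on the feasible set the Lagrangian collapses onto the objective: since $x_i^2 = 1$ for every $\bm x \in \{-1,1\}^n$, the penalty term $\sum_i\tfrac12\lambda_i(x_i^2-1)$ vanishes identically, so $L(\bm x,\bm\lambda) = f(\bm x)$ for all feasible $\bm x$ and \emph{all} $\bm\lambda$. I would then exploit that $\bar{\bm\lambda}\in\mathcal{S}^{+}$ makes $Q(\bar{\bm\lambda})\succ 0$, so that $\bm x\mapsto L(\bm x,\bar{\bm\lambda})$ is a strictly convex quadratic on all of $\mathbb{R}^n$; its unique unconstrained minimizer is the solution of the stationarity equation $Q(\bar{\bm\lambda})\bm x = \bm c$, which is exactly $\bar{\bm x}$.

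Putting the two pieces together produces the sandwich: for every feasible $\bm x\in\{-1,1\}^n$,
\begin{equation}
f(\bm x) = L(\bm x,\bar{\bm\lambda}) \ge \min_{\bm y\in\mathbb{R}^n} L(\bm y,\bar{\bm\lambda}) = L(\bar{\bm x},\bar{\bm\lambda}) = f(\bar{\bm x}),
\end{equation}
where the first equality uses feasibility of $\bm x$, the inequality follows because minimizing over $\mathbb{R}^n$ only relaxes the integrality constraint, the middle equality is the global convex minimization of the previous step, and the last equality uses feasibility of $\bar{\bm x}$. This establishes $f(\bar{\bm x})\le f(\bm x)$ for every admissible $\bm x$, so $\bar{\bm x}$ is a global solution of the BQP.

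I expect the only genuinely technical step to be the gradient computation together with the careful justification that $\nabla g(\bar{\bm\lambda})=\bm 0$ forces each $\bar x_i^2=1$; everything afterward is essentially weak duality made tight by the coincidence of $L$ and $f$ on $\{-1,1\}^n$ combined with strict convexity of $L(\cdot,\bar{\bm\lambda})$ on $\mathcal{S}^{+}$. The conceptual crux, and the reason no duality gap arises, is precisely that the stationarity condition delivers feasibility of $\bar{\bm x}$, which lets the unconstrained convex minimum be attained at a point that is simultaneously admissible for the original combinatorial problem.
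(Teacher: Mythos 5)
Your proposal is correct and follows essentially the same route as the paper: the identical gradient computation $\partial g/\partial\lambda_i = \tfrac12(\bar x_i^2-1)$ turns criticality into feasibility $\bar{\bm x}\in\{-1,1\}^n$, and your weak-duality sandwich (with $L(\cdot,\bar{\bm\lambda})$ strictly convex and minimized at $\bar{\bm x}$ since $Q(\bar{\bm\lambda})\bar{\bm x}=\bm c$ and $Q(\bar{\bm\lambda})\succ 0$) is exactly what the paper proves by hand via the completed-square identity $f(\bm x)-f(\bar{\bm x}) = \tfrac12(\bm x-\bar{\bm x})^T Q(\bar{\bm\lambda})(\bm x-\bar{\bm x})\ge 0$ for feasible $\bm x$. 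The only cosmetic difference is that you invoke the standard fact that a strictly convex quadratic is globally minimized at its stationary point, whereas the paper inlines that fact as an explicit algebraic expansion.
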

\begin{proof}
The derivative of $g(\boldsymbol{\lambda})$ gives
\begin{equation*}
\begin{aligned}
\frac{\partial g(\boldsymbol{\lambda})}{\partial \boldsymbol{\lambda}_{i}} &=-\frac{1}{2} \frac{\partial\left[\boldsymbol{c}^{T} Q^{-1}(\boldsymbol{\lambda}) \boldsymbol{c}\right]}{\partial \boldsymbol{\lambda}_{i}}-\frac{1}{2} \\
&=\frac{1}{2} \boldsymbol{c}^{T} Q^{-1}(\overline{\boldsymbol{\lambda}}) \frac{\partial Q(\boldsymbol{\lambda})}{\partial \overline{\boldsymbol{\lambda}}_{i}} Q^{-1}(\overline{\boldsymbol{\lambda}}) \boldsymbol{c}-\frac{1}{2} \\
&=\frac{1}{2} \overline{\boldsymbol{x}}_{i}^{2}-\frac{1}{2}
\end{aligned}
\end{equation*}
where, $\overline{\boldsymbol{x}}=Q^{-1}(\overline{\boldsymbol{\lambda}}) \boldsymbol{c}$. Since $\overline{\boldsymbol{\lambda}}$ is a critical point of the Lagrangian dual function, we have $\frac{1}{2} \overline{\boldsymbol{x}}_{i}^{2}-\frac{1}{2}=0$.

To continue, we have
\begin{eqnarray*}
&&f(\bm x) \!-\! f(\bar{\bm x}) \\
&=& \frac{1}{2} \bm x^T Q \bm x - \bm c^T \bm x - \frac{1}{2} \bar{\bm x}^T Q \bar{\bm x} + \bm c^T \bar{\bm x} - \sum_{i=1}^n \frac{1}{2} \bar{\lambda}_i (\bar{x}_i^2 - 1) \\
&=& \frac{1}{2} (\bm x - \bar{\bm x})^T Q(\bar{\bm \lambda})(\bm x - \bar{\bm x}) \!-\! \frac{1}{2} \bm x^T \mathrm{diag}(\bar{\bm \lambda}) \bm x \!+\! \bm x^T Q(\bar{\bm \lambda}) \bar{\bm x}\\
&& \!-\!
\bar{\bm x}^T Q(\bar{\bm \lambda}) \bar{\bm x}  - \bm c^T (\bm x - \bar{\bm x}) + \frac{1}{2}\bar{\bm \lambda}^T \bm e \\
&=& \frac{1}{2} (\bm x - \bar{\bm x})^T Q(\bar{\bm \lambda})(\bm x - \bar{\bm x}) + (\bm x - \bar{\bm x})^T(Q(\bar{\bm \lambda}) \bar{\bm x} - \bm c)\\
&=&\frac{1}{2} (\bm x - \bar{\bm x})^T Q(\bar{\bm \lambda})(\bm x - \bar{\bm x}) > 0, \forall \bm x \in \{-1,1\}^n
\end{eqnarray*}
that is to say, $\overline{\boldsymbol{x}}=Q^{-1}(\overline{\boldsymbol{\lambda}}) \boldsymbol{c}$ is the global minimizer of the BQP problem.
This completes the proof.
\end{proof}
\begin{remark}
Under the conditions in Theorem \ref{the1}, it is easy to verify that
\begin{eqnarray*}
\begin{aligned}
g(\overline{\boldsymbol{\lambda}}) &=-\frac{1}{2} \boldsymbol{c}^{T} Q^{-1}(\overline{\boldsymbol{\lambda}}) \boldsymbol{c}-\frac{1}{2} \overline{\boldsymbol{\lambda}}^{T} \boldsymbol{e} \\
&=-\frac{1}{2} \overline{\boldsymbol{x}}^{T} Q(\boldsymbol{\lambda}) \overline{\boldsymbol{x}}+\left(\overline{\boldsymbol{x}}^{T} Q(\boldsymbol{\lambda}) \overline{\boldsymbol{x}}-\boldsymbol{c}^{T} \overline{\boldsymbol{x}}\right)-\frac{1}{2} \boldsymbol{\lambda}^{T} \boldsymbol{e} \\
&=\frac{1}{2} \overline{\boldsymbol{x}}^{T} Q(\overline{\boldsymbol{\lambda}}) \overline{\boldsymbol{x}}-\boldsymbol{c}^{T} \overline{\boldsymbol{x}}-\frac{1}{2} \overline{\boldsymbol{\lambda}}^{T} \boldsymbol{e}=L(\overline{\boldsymbol{x}}, \overline{\boldsymbol{\lambda}}) \\
&=\frac{1}{2} \overline{\boldsymbol{x}}^{T} Q \overline{\boldsymbol{x}}-\boldsymbol{c}^{T} \overline{\boldsymbol{x}}+\sum_{i=1}^{n} \frac{1}{2} \bar{\lambda}_{i}\left(\bar{x}_{i}^{2}-1\right) \\
&=f(\overline{\boldsymbol{x}}),
\end{aligned}
\end{eqnarray*}
which shows that there is no duality gap between the primal and dual problems.
\end{remark}

\section{A benchmark generator for BQP}
As stated above, if there exists a critical point of Lagrangian dual function in $\mathcal{S}^{+}$, zero duality gap will be met.
In this section, we will construct a benchmark generator for BQP problem such that these conditions are satisfied.

The inverse problem can be simplified as follows
\begin{eqnarray}
\begin{array}{cl}
\text { find } & Q, c, x, \lambda \\
\text { s.t. } & (Q+\operatorname{diag}(\lambda)) x=c \\
& Q+\operatorname{diag}(\lambda) \succ 0 \\
& x \in\{-1,1\}^{n} \label{eq}
\end{array}
\end{eqnarray}
where, $Q=Q^{T} \in \mathbb{R}^{n \times n}, \boldsymbol{c}, \boldsymbol{x}, \boldsymbol{\lambda} \in \mathbb{R}^{n}$.

Suppose $Q$ is a freely random symmetric matrix, to guarantee $Q+\operatorname{diag}(\boldsymbol{\lambda}) \succ 0$, let $\lambda$ satisfy
\begin{eqnarray}
\lambda_{i} \geq \sum_{j=1}\left|Q_{i j}\right|,\label{eq}
\end{eqnarray}
to make sure that $Q(\lambda)$ is a diagonally dominant matrix.

Suppose $\boldsymbol{x}$ is a freely random ``true" solution, then $\boldsymbol{c}$ should satisfy
\begin{eqnarray}
c=(Q+\operatorname{diag}(\boldsymbol{\lambda})) \boldsymbol{x}.\label{eq}
\end{eqnarray}

The matlab scripts for generating a benchmark BQP are given in the following
\begin{verbatim}
function [Q,c,x,lambda] = generate_Qc(n)
base = 10;
Q = base*randn(n);
Q = round((Q + Q')/2);
lambda = zeros(n,1);
x = round(rand(n,1));
x = 2*x - 1;
lambda = sum(abs(Q),2);
c = (Q + diag(lambda))*x;
\end{verbatim}
where, \textit{base} is set to control the range of elements in $Q$.\\
\section{Numerical experiments}
The Lagrangian dual problem can be reformulated as the following SDP problem easily via Schur complement \cite{boyd1994linear}
\begin{eqnarray}
\begin{aligned}
(\mathrm{SDP}): & \min \frac{1}{2} t+\frac{1}{2} \lambda^{T} e \\
& \text { s.t. }\left(\begin{array}{c}
Q(\lambda) \quad c \\
c^{T} \qquad t
\end{array}\right) \succeq 0 \label{eq}
\end{aligned}
\end{eqnarray}

In the next, several examples are created by the above mentioned generator function \textit{generate\_Qc}$(\cdot)$.
All of the experiments are run on MATLAB R2010b on Intel(R) Core(TM) i3-2310M CPU @2.10GHz under Window 7 environment.
The SDPT3 \cite{toh1999sdpt3} is used as a solver embedded in YALMIP \cite{lofberg2004yalmip} for the SDP problem.
\begin{example}\label{exa1}
\begin{eqnarray*}
Q=\left(
 \begin{array}{ccccc}
   -4 & -3  & 6   & -3 & -6 \\
   -3 & 13  & -25 & -5 & 3 \\
   6  & -25 & -2  & -5 & -1 \\
   -3 & -5  & -5  & -8 & -7 \\
   -6 & 3   & -1  & -7 & -5
 \end{array}
\right),
\quad
\boldsymbol{c}=\left(
 \begin{array}{c}
  -18 \\
  92 \\
  -62 \\
  -10 \\
  0
 \end{array}
\right)
\end{eqnarray*}
\end{example}
By solving the corresponding Lagrangian dual problem in 0.9204 seconds, we can get $\overline{\boldsymbol{\lambda}}=(21.9996,48.9999,39.0000,27.9996,21.9998)^{T}$ and $\overline{\boldsymbol{x}}=Q^{-1}(\overline{\boldsymbol{\lambda}})c$ =$(-1.0000,1.0000, -1.0000,-1.0000,-1.0000)^T$.

\begin{strip}
\begin{example}\label{exa2}
 \begin{equation*}
	\boldsymbol{Q}=
      \begin{pmatrix}
        -6  &  -9  &  -7  &  -3   &  2  &  -2  &  -12 & -11  &    8 &  -6 \\
        -9  &  14  &   -4 &   3   &  -2 &  -4  &   6  &  23  &   8  &  3 \\
        -7  &  -4  &   1  &  21   & -10 &    2 &    6 &  -13 &  -9  &  4 \\
        -3  &   3  &  21  & -18   &  -2 &    7 &   -2 &   16 &   1  &   3 \\
        2   &  -2  & -10  &  -2   & 10  &   8  & -11  &  -3  &  -2  &  -2 \\
        -2  &  -4  &   2  &   7   &  8  &   0  & -10  & -10  & -2   &   -7 \\
        -12 &   6  &   6  &  -2   & -11 &  -10 &   -7 &  -10 &   3  &   0 \\
        -11 &  23  & -13  &  16   & -3  & -10  & -10  &   6  &   -8 &  10 \\
        8   &   8  &  -9  &   1   & -2  &  -2  &   3  &  -8  &  -10 &   -5\\
        -6  &   3  &   4  &   3   & -2  &  -7  &   0  &  10  &   -5 &  -9 \\
     \end{pmatrix},
     \quad
     \boldsymbol{c}=\left(
     \begin{array}{c}
     24\\
     -84\\
     72\\
     -18\\
     -72\\
     16\\
     38\\
     54\\
     -66\\
     42\\
   \end{array}
   \right)
 \end{equation*}
\end{example}
\end{strip}

By solving the Lagrangian dual problem  in 0.9204 seconds, we can get
$\overline{\boldsymbol{\lambda}}
= (66.0010,75.9997,76.9970,\newline 76.0010,51.9993,51.9988,66.9988,109.9979, 55.9985,\newline 48.9995)^{T}$
and  $\overline{\boldsymbol{x}}=Q^{-1}(\overline{\boldsymbol{\lambda}})c= (1.0000,-1.0000, 1.0000,\newline -1.0000, -1.0000, 1.0000,1.0000,1.0000,-1.0000,1.0000)^{T}$.

\begin{strip}
\begin{example}\label{exa3}
\setcounter{MaxMatrixCols}{20}
 \begin{eqnarray*}
	\boldsymbol{Q}=
     \begin{pmatrix}
      11  &    2  & -10  &  15  &  21   &  -7   &  -8  &   6  &   -3  &   11  &  2   &  -1  &    3  &  -2  &  -1\\
     2   &   6   &   7  &   -5 &   10  &  -14  & -1   &  -8  &   3   &   6   &   6  &  0   &   -7  &  1   &  -2\\
    -10  &   7   &  21  &   12 &   13  &   -9  &  -1  &   2  &  -5   &   9   &   2  &  -1  &   -2  &  4   &  8\\
    15   &  -5   &   12 &   12 &   -7  &   0   & -3   & -17  &   -3  &   6   &  -1  &  -1  &   -1  &  6   &  -5\\
    21   &  10   &  13  &   -7 &   3   &   6   &   3  &  -1  &  -10  &   0   &  -9  &  -1  &   -4  &  -7  &  -2\\
    -7   &  -14  &  -9  &   0  &   6   &   5   &   1  &   7  &   3   &   2   &  -1  &   3  &   -4  &  3   &  8\\
    -8   &  -1   &  -1  &  -3  &   3   &   1   &  -5  &  -5  &   3   &   6   &   17 & -13  &   6   &  14  & -10\\
     6   &  -8   &   2  &  -17 &   -1  &   7   & -5   &  -2  &   -6  &   1   &  12  &   0  &   5   &   5  &  -4\\
    -3   &  3    &  -5  &  -3  & -10   &   3   &   3  &  -6  &  -26  &   3   &   -4 &   7  &  13   &  -4  &  -2\\
    11   &   6   &   9  &   6  &   0   &   2   &   6  &   1  &   3   &  13   &  -11 &  10  &  -12  & -13  &  -11\\
     2   &   6   &   2  &  -1  &   -9  &   -1  &  17  &  12  &  -4   &  -11  &  -1  &  12  &  -9   &   7  &  5\\
    -1   &   0   &  -1  &   -1 &  -1   &   3   & -13  &   0  &   7   &   10  &   12 &  -1  &  -1   &   5  &   9\\
     3   &   -7  &  -2  &   -1 &  -4   &  -4   &   6  &  5   &  13   &  -12  &   -9 &  -1  & -15   &   6  &  -1\\
    -2   &    1  &   4  &   6  & -7    &   3   &  14  &   5  &  -4   &  -13  &   7  &   5  &   6   &  15  &  10\\
    -1   &   -2  &  8   &  -5  &  -2   &   8   &  -10 &  -4  &  -2   &  -11  &   5  &   9  &  -1   &   10 &   8\\
     \end{pmatrix},
     \quad
     \boldsymbol{c}=\left(
     \begin{array}{c}
     -140\\
      86 \\
     -118 \\
     -114 \\
     -134 \\
     -72  \\
     92  \\
     -100 \\
     120 \\
     98  \\
     -80 \\
     70  \\
     90  \\
     120 \\
      78 \\
   \end{array}
   \right)
 \end{eqnarray*}
\end{example}
\setlength{\parskip}{4pt}
\captionsetup[table]{labelformat=simple ,labelsep=quad}
\renewcommand{\thetable}{\arabic{table}}
\end{strip}

By solving the Lagrangian dual problem  in 0.9204 seconds, we can get
$\overline{\boldsymbol{\lambda}}
= (102.9966,77.9974,105.9976,\newline  93.9972,96.9967,72.9976,95.9974, 80.9970, 94.9967,\newline113.9981,98.9980,64.9976,88.9972,101.9973, 85.9979)^{T}$
and $\overline{\boldsymbol{x}}=Q^{-1}(\overline{\boldsymbol{\lambda}})c= (-1.0000,1.0000,-1.0000,\newline -1.0000,-1.0000,-1.0000,1.0000,-1.0000, 1.0000,1.0000,\newline -1.0000, 1.0000, 1.0000,1.0000,1.0000)^{T}$.

\begin{remark}
	All of the tested problems are solved in 0.9204 seconds, which can be regarded as an indicator for polynomial time complexity.
\end{remark}

\begin{example}
\textup{Other large random BQP problems are created by the same procedure, and the corresponding running time for solving these problems can be found in Fig. \ref{fig_runtime}.}
\end{example}

\begin{figure}[htbp]
	\centerline{\includegraphics[width=8cm,height=6cm]{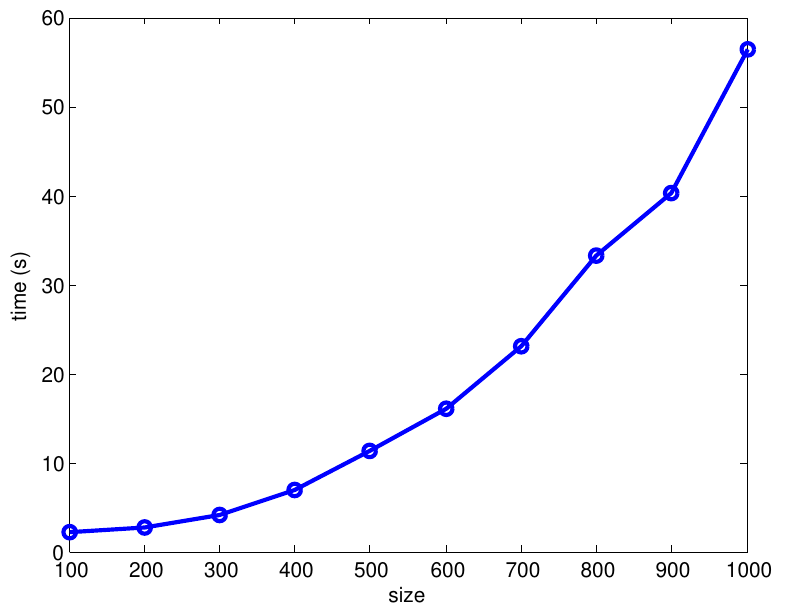}}
	\caption{Running time for other large random BQP cases}
	\label{fig_runtime}
\end{figure}

\section{Conclusion}
In this short note, a benchmark generator for boolean quadratic programming (BQP) was implemented in Matlab. And these BQP instances can be used for evaluating the performance of various approaches.

\end{document}